\newtheorem{theorem}{Theorem}[section]
\newtheorem{cor}[theorem]{Corollary}
\theoremstyle{definition}
\newtheorem{mydef}[theorem]{Definition}
\newtheorem{exa}[theorem]{Example}
\theoremstyle{remark}
\newcommand{\bigslant}[2]{{\raisebox{0em}{$#1$}\!\left/\raisebox{-.2em}{$#2$}\right.}}
\begin{document}

\title{  ON THE QUOTIENT SPACE OF A GENERALIZED ACTION OF A GENERALIZED GROUP\\  \vspace{1cm}{\small HASAN MALEKI, MOHAMMADREZA MOLAEI}\\ {\small Department of Mathematics, Shahid Bahonar University of Kerman, Kerman, Iran}\\ {\small e-mails: maleki\_hasan@yahoo.com; mrmolaei@uk.ac.ir }  }
\author{}
\date{}
\maketitle

\begin{abstract}
 In this paper by using of generalized groups and their generalized actions, we define and study the notion of $T$-spaces. Moreover, we study properties of the quotient space of a $T$-space and we present the conditions that imply to the Hausdorff property for it. Moreover, we study the maps between two $T$-spaces and we consider the notion of $T$-transitivity.
\end{abstract}

{\bf Key words:} Generalized action; Transitivity; $T$-space; Quotient space 
\section{Introduction}
 Groups appears in many branches of mathematics such as number theory, geometry and theory of lie groups and also they can find plenty applications even in physics and chemistry. Generalized groups are an interesting and fascinating extension of groups. They have been introduced in 1999 \cite{GG}. In a group, there is a unique identity element but in a  generalized group, each element has a spacial identity element. This kind of structure appears in genetic codes \cite{MS}. The role of identity as a mapping made many new important challenges. Generalized groups have been applied in genetic \cite{A, MS}, geometry \cite{TS} and dynamical systems \cite{CS}. The notion of generalized action \cite{m} is an extension of the notion of group actions \cite{Do, Gu}.  This notion has been studied first in 1999 \cite {m, TGP, MS}. The generalized action has been applied by the other researchers \cite{Neda eb, AGG}.\\
Let us recall the definition of a generalized group or completely simple semi group \cite{JAJK}.
A \textit{generalized group} is a semigroup  $ T $ with the following conditions:
\begin{enumerate}[(i)]
\item For each $t \in T$ there exists a unique $e(t) \in T$ such that $t\cdot e(t)=e(t)\cdot t=t$;
\item  For each $t \in T$ there exists $s \in T$ such that $s\cdot t=t\cdot s=e(t)$.
\end{enumerate}

One can easily prove that:  each $t$ in $T$, has a unique
inverse in $T$. The inverse of $t$ is denoted by $t^{-1}$. Moreover; for given $t \in T$, $e(t)=e(t^{-1})$ and $e(e(t))=e(t)$. Let $T$ and $S$ be tow generalized groups. A map $f : T \rightarrow S$ is called a \textit{homomorphism} if $f(st) = f(s)\cdot f(t)$ for every $s, t\in T$. We refer to \cite{MS} for more details. The geometrical viewpoint of completely simple semigroups are taken in \cite{OTS}. Here we study more properties of the generalized action of a generalized group on a topological spaces. In the next section we present the notion of $T$-spaces. We consider the quotient space of a $T$-space and we deduce the conditions that imply to the Hausdorff property for it. In the last section the maps between two $T$-spaces and $T$-transitivity are considered.

\section{$T$-Spaces}
%\begin{rem}$\bigslant{X}{T}$ \end{rem}
%The first, we recall some preliminaries of generalized groups.
Let us to begin this section by recalling the definition of topological generalized group.\\
\begin{mydef}
\label{tgg}
\cite{A, TGP} A \textit{topological generalized group} is a Hausdorff topological space $T$ which is endowed with a generalized group structure such that the generalized group operations $ m_{1}:T\rightarrow T $ defined by $t\mapsto t^{-1} $ and $ m_{2}:T \times T \rightarrow T $ by $ (s,t)\mapsto s\cdot t $ are continuous maps and  \begin{align}
 \label{st}
e(s\cdot t)=e(s)\cdot e(t).
\end{align}
 \end{mydef}

 \noindent As Shown in \cite{MS}, every topological generalized group $T$ is a disjoint union of topological groups. Moreover, the mapping $e:T\rightarrow T$ defined by $t\mapsto e(t)$, is a continuous map. Note that in a topological generalized group, the property~\eqref{st} implies that
\begin{align}
\label{sts}
e(s)\cdot e(t)\cdot e(s)=e(s), ~ \forall t,s \in T.
\end{align}
This property will be used frequently.

\begin{theorem}
If $T$ is a topological generalized group,then $$(s\cdot t)^{-1}=e(s)\cdot t^{-1}\cdot s^{-1}\cdot e(t)$$
\end{theorem}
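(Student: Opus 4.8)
The strategy is to verify directly that the proposed element $x := e(s)\cdot t^{-1}\cdot s^{-1}\cdot e(t)$ satisfies the defining property of the inverse of $s\cdot t$ in the generalized group $T$, namely that $(s\cdot t)\cdot x = x\cdot(s\cdot t) = e(s\cdot t)$; since inverses in a generalized group are unique, this will give $x = (s\cdot t)^{-1}$. The only tools needed are associativity of the semigroup operation, the identity $t\cdot e(t) = e(t)\cdot t = t$, the facts $t\cdot t^{-1} = t^{-1}\cdot t = e(t)$ and $e(t) = e(t^{-1})$, the homomorphism-type property $e(s\cdot t) = e(s)\cdot e(t)$ from~\eqref{st}, and crucially its consequence~\eqref{sts}, i.e. $e(s)\cdot e(t)\cdot e(s) = e(s)$.

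First I would compute $(s\cdot t)\cdot x$. Reassociating, this is $s\cdot\bigl(t\cdot e(s)\cdot t^{-1}\bigr)\cdot s^{-1}\cdot e(t)$; this expression is slightly awkward because $t\cdot e(s)$ does not obviously simplify. A cleaner route is to compute $x\cdot(s\cdot t)$ first: $x\cdot(s\cdot t) = e(s)\cdot t^{-1}\cdot s^{-1}\cdot e(t)\cdot s\cdot t$. Here I would like to push $e(t)$ to the right past $s$; again this needs care. The key realization is that one should symmetrize using~\eqref{sts}. Concretely, I expect the clean computation to be: $(s\cdot t)\cdot x\cdot(s\cdot t) = (s\cdot t)$ together with $x\cdot(s\cdot t)\cdot x = x$ would already identify $x$ as the inverse, but the fastest path is to show $(s\cdot t)\cdot x = e(s\cdot t) = e(s)\cdot e(t)$ outright. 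Computing $(s\cdot t)\cdot x = s\cdot t\cdot e(s)\cdot t^{-1}\cdot s^{-1}\cdot e(t)$, I would insert $e(t)$ strategically (note $t = t\cdot e(t)$ and $t^{-1} = e(t)\cdot t^{-1}$), rewrite $t\cdot e(s)\cdot t^{-1}$, and hope that the middle collapses via $e(t)\cdot e(s)\cdot e(t) = e(t)$ — wait, that is~\eqref{sts} with the roles of $s,t$ swapped. This suggests the honest computation groups the factors so that a pattern $e(\,\cdot\,)\cdot e(\,\cdot\,)\cdot e(\,\cdot\,)$ appears.

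The main obstacle is precisely the bookkeeping: a topological generalized group is in general non-commutative and the $e(\,\cdot\,)$ elements are not central, so one cannot freely move them around; every simplification must be justified by associativity plus one of the listed identities, and the correct insertion points for "phantom" factors $e(s)$ or $e(t)$ (using $s = e(s)\cdot s$, $t^{-1} = e(t)\cdot t^{-1} = t^{-1}\cdot e(t^{-1})$, etc.) must be chosen so that~\eqref{sts} can be applied. I would carry out both products $(s\cdot t)\cdot x$ and $x\cdot(s\cdot t)$, reduce each to $e(s)\cdot e(t)$ using these moves, and then invoke $e(s)\cdot e(t) = e(s\cdot t)$ from~\eqref{st} and the uniqueness of inverses stated in the introduction to conclude $x = (s\cdot t)^{-1}$. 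A useful sanity check along the way is the group case: if every $e(t)$ equals the global identity, the formula degenerates to the familiar $(st)^{-1} = t^{-1}s^{-1}$, which confirms the shape of $x$ is right.
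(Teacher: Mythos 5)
Your plan is correct and follows essentially the same route as the paper: verify that $x=e(s)\cdot t^{-1}\cdot s^{-1}\cdot e(t)$ multiplies with $s\cdot t$ on both sides to give $e(s)\cdot e(t)=e(s\cdot t)$, using associativity, insertion of factors $t=t\cdot e(t)$, $t^{-1}=e(t)\cdot t^{-1}$ (and likewise with $e(s)$), and the identity~\eqref{sts} with the roles of $s$ and $t$ swapped, then conclude by uniqueness of inverses. This is exactly the computation carried out in the paper's proof.
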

\begin{proof}
 \begin{align*}
(s\cdot t)(e(s)\cdot t^{-1}\cdot s^{-1}\cdot e(t))&=s\cdot (t\cdot e(s)\cdot t^{-1}\cdot s^{-1}\cdot e(t))\\
&= s\cdot (t\cdot e(t)\cdot e(s)\cdot e(t)\cdot t^{-1}\cdot s^{-1}\cdot e(t))\\
&= s\cdot (t\cdot e(t)\cdot t^{-1}\cdot s^{-1}\cdot e(t))\\
&=s\cdot (e(t)\cdot s^{-1}\cdot e(t))\\
&=(s\cdot e(t)\cdot s^{-1})\cdot e(t)\\
&=(s\cdot e(s)\cdot e(t)\cdot e(s)\cdot s^{-1})\cdot e(t)\\
&=(s\cdot e(s)\cdot s^{-1})\cdot e(t)\\
&=e(s)\cdot e(t)=e(s\cdot t).
\end{align*}
We also have $(e(s)\cdot t^{-1}\cdot s^{-1}\cdot e(t))\cdot (s\cdot t)=e(s\cdot t)$. So $(s\cdot t)^{-1}=e(s)\cdot t^{-1}\cdot s^{-1}\cdot e(t)$.
\end{proof}

\begin{exa}
\label{e1}
If $T$ is the topological space $$  \mathbb{R}^{2}-\{(0,0)\}=\{re^{i\theta}|\quad r>0\quad \text{and}\quad 0\leqslant \theta< 2\pi\} $$ with the Euclidean metric, then $T$ with the multiplication
\begin{align}
(r_{1}e^{i\theta_{1}})\cdot(r_{2}e^{i\theta_{2}})=r_{1}r_{2}e^{i\theta_{2}}
\end{align}
is a topological generalized group. We have $e(re^{i\theta})=e^{i\theta}$ and $(re^{i\theta})^{-1}=\frac{1}{r}e^{i\theta}$. So we can see the identity set $e(T)$ is the unit circle $S^{1}$. However, $T$ is not a topological group.
\end{exa}

\begin{mydef}
\label{ga}
Suppose that $X$ is a topological space and $T$ is a topological generalized group. A \textit{generalized action} of $T$ on $X$ is a continuous map $\lambda:T\times X\longrightarrow X$ with the following properties:
\begin{enumerate}[(i)]
\item $\lambda(s,\lambda(t,x))=\lambda(s\cdot t,x)$, for $s,t \in T$ and $x\in X$;
\item If $x \in X$, then is $e(t) \in T$ such that $\lambda(e(t),x)=x$.
\end{enumerate}
\end{mydef}

Henceforth, $ \lambda(t,x) $ will be denoted by $tx$. For the more details on the generalized action, one can see \cite{AGG, m}. Now let us define the notion of $T$-spaces.
\begin{mydef}
A\textit{ $T$-space} is a triple $(X,T,\lambda)$ where $X$ is a Hausdorff topological space, $T$ is a topological generalized group and $\lambda:T\times X\longrightarrow X$  is   a   generalized action  of $T$ on $X$.
\end{mydef}

\begin{mydef}
If  $(X,T,\lambda)$ is a $T$-space and $x \in  X$, then:
\begin{enumerate}[(i)]
\item $ T_{x}=\{ t \in T| \quad tx=x \} $ is called the \textit{stabilizer} of $x$ in $T$;
\item $T(x)=\{tx | \quad t\in T\}$ is called \textit{$T$-orbit }of $x$ in $X$.
\end{enumerate}
\end{mydef}

For $x \in X$, $T_{x}$ is a generalized subgroup of $T$ \cite{AGG}. For $x \in X$, if the stabilizer $T_{x}$ is trivial (i.e. $T_{x}=\{e(t)\}$ for some $t \in T$), we say that $x$ is a \textit{regular} point, otherwise, it is called  a \textit{singular} point. The \textit{singular} set of $X$, denoted by $ \sum_{X} $, is the set of singular points of $X$. We define two maps $\theta_{t}:X\rightarrow X$ and $\rho_{x}:T\rightarrow X$, by $\theta_{t}(x)=tx$ and $ \rho_{x}(t)=tx $, respectively, where $t \in T$ and $x \in X$. $\theta_{t}$ and $\rho_{x}$ are continuous maps. Clearly, $T(x)=\rho_{x}(T)$ and  $T_{x} = (\rho_{x})^{-1}(x)$. So $T_{x} $ is a closed subset of $T$ and then we can say it is a closed generalized subgroup of $T$. Furthermore, $T_{x}$ is a topological generalized subgroup of $T$ \cite{AGG}.

By the action $\lambda$ of a $T$-space $(X,T,\lambda)$, we can define the following equivalence relation on $X$:
\begin{center}
$x\sim y$ if and only if there is $t \in T$ such that $tx=y$.
\end{center}
Now, we consider the quotient space $\bigslant{X}{\sim}$ and by the projection map $\pi:X\rightarrow \bigslant{X}{\sim}$, we define a topology on $\bigslant{X}{\sim}$ such that $\pi$ is a continuous map. In fact $U\subseteq \bigslant{X}{\sim}$ is open if $\pi^{-1}(U)$ is open in $X$.

Henceforth, we will use of the notation $\bigslant{X}{T}$ for the topological quotient space $\bigslant{X}{\sim}$.

\begin{mydef}
Suppose $(X,T,\lambda)$ is a $T$-space. Then  the generalized action $\lambda$ is called:
\begin{enumerate}[(i)]
\item \textit{effective }if for any two distinct $ s, t \in T$, there exists $x\in X$ such that $sx \neq tx$;
\item \textit{transitive} if for given $x,y\in X$, there exists  $t  \in T$ such that $tx=y$;
\item \textit{free} if for each $x\in X$, there exists precisely one $t \in T$ such that $tx=x$;
\item \textit{regular} if it is transitive and free.
\end{enumerate}
\end{mydef}

\noindent $\lambda$ is transitive if and only if $T(x)=X$, for each $x\in X$ and $\lambda$  is free if and only if $T_{x}=\{e(t)\}$, for each $x \in X$.

\begin{exa}
\label{e3}
Every topological generalized group  $T$ acts on itself by the multiplication of $T$ :  $st=s\cdot t$ for all $s,t \in T$. Note that this action need not be free, for instance, the action $xy=y$ as the multiplication of $T$ implies $T_{x}=T$, for $x \in X$.
\end{exa}

\begin{exa}
\label{e4}
Every topological generalized group  $T$ acts on itself by the multiplication $st=e(s)\cdot t.$
\end{exa}

\begin{exa}
Let $T$ be $\mathbb{R}-\{0\}$ with the Euclidean metric. $T$ with the multiplication $x\cdot y=x$ is a topological generalized group that if $x\in T$, then $e(x)=x^{-1}=x$. $T$ acts on itself with this multiplication. This action is regular.
\end{exa}

We recall that if $ T $ is a generalized group, $X$ is a set and $ S=\{\varphi^{t}~|\quad \varphi^{t}:X\rightarrow X \quad \text{is a mapping and } t \in T\} $, then the triple $(X,S,T)$ is called a \textit{ complete semidynamical system} if:
\begin{enumerate}[(i)]
\item $\varphi^{t_{1}}\circ \varphi^{t_{2}}=\varphi^{t_{1}\cdot t_{2}}$, for all $t_{1}, t_{2} \in T$;
\item For given $x \in X$, there is $\varphi^{t} \in D$ such that $x$ is a fixed point of $ \varphi^{t} $.
\end{enumerate}
We see that each $T$-space $(X,T,\lambda)$ generates a complete semidynamical system $(X,S,T)$ where
\begin{equation*}
S=\{ \theta_{t}:X\to X ~|~ \theta_{t}(x)=tx, \ \text{ for } x \in X  \text{ and }  t \in T  \}.
\end{equation*}

As shown in \cite{AGG}, if $e(T) \subseteq T_{x}$, for each $x\in X$, then $S$ with the multiplication $\theta_{s}\circ \theta_{t}=\theta_{st}$  is a topological generalized  group. In this case, for each $\theta_{t}\in S, e(\theta_{t})=\theta_{e(t)}$ and $(\theta_{t})^{-1}=\theta_{t^{-1}}$.

\begin{theorem}{\label{homeo}}
If $e(T) \subseteq T_{x}$,  for each $x\in X$, then  each $\theta_{t}$ is a homeomorphism.
\end{theorem}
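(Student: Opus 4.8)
The plan is to show that each $\theta_t$ is a bijection with continuous inverse, using the hypothesis $e(T)\subseteq T_x$ to produce the inverse explicitly. First I would observe that continuity of $\theta_t$ is already recorded in the excerpt, so the real content is invertibility. The natural candidate for the inverse of $\theta_t$ is $\theta_{t^{-1}}$, which is also continuous. So the whole proof reduces to checking that $\theta_t\circ\theta_{t^{-1}}=\mathrm{id}_X$ and $\theta_{t^{-1}}\circ\theta_t=\mathrm{id}_X$.

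Here the subtlety is that $\theta_t\circ\theta_{t^{-1}}=\theta_{t\cdot t^{-1}}=\theta_{e(t)}$ by the composition law (this is exactly where the hypothesis $e(T)\subseteq T_x$ is needed, since the excerpt only guarantees $S$ is a generalized group under that hypothesis), and $\theta_{e(t)}$ is \emph{not} a priori the identity map — it is the identity of $S$, not of $X$. So the key step is: for every $x\in X$, $\theta_{e(t)}(x)=e(t)\,x=x$. This is precisely the statement that $e(t)\in T_x$ for all $x$, which is exactly the hypothesis $e(T)\subseteq T_x$. Thus $\theta_{e(t)}=\mathrm{id}_X$, and similarly $\theta_{t^{-1}\cdot t}=\theta_{e(t^{-1})}=\theta_{e(t)}=\mathrm{id}_X$ (using $e(t^{-1})=e(t)$ from the excerpt). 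Therefore $\theta_{t^{-1}}$ is a two-sided inverse of $\theta_t$, both maps are continuous, and $\theta_t$ is a homeomorphism.

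The main obstacle — really the only one — is recognizing that $\theta_{e(t)}$ must be identified with $\mathrm{id}_X$ rather than merely with the identity element of the generalized group $S$; once one sees that the hypothesis $e(T)\subseteq T_x$ is tailored exactly to force $e(t)x=x$ for all $x$, the argument is immediate. Everything else (continuity of $\theta_t$ and $\theta_{t^{-1}}$, the composition law $\theta_s\circ\theta_t=\theta_{s\cdot t}$, and the identities $t\cdot t^{-1}=t^{-1}\cdot t=e(t)$, $e(t^{-1})=e(t)$) is quoted directly from earlier in the paper.
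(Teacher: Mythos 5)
Your proposal is correct and follows essentially the same route as the paper: exhibit $\theta_{t^{-1}}$ as a two-sided continuous inverse via $\theta_t\circ\theta_{t^{-1}}=\theta_{e(t)}$ and use $e(T)\subseteq T_x$ to identify $\theta_{e(t)}$ with $\mathrm{id}_X$ (the paper additionally spells out injectivity and surjectivity, which your two-sided inverse makes redundant). One small remark: the composition law $\theta_s\circ\theta_t=\theta_{s\cdot t}$ already follows from axiom (i) of the generalized action without the hypothesis; $e(T)\subseteq T_x$ is needed only, as you correctly emphasize, to force $e(t)x=x$ for all $x$.
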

\begin{proof}
 Every $\theta_{t}$ is a continuous map on $X$. Now we claim that each $\theta_{t}$ is one to one, onto and has the  inverse $\theta_{t^{-1}}$. If $\theta_{t}(x)=\theta_{t}(y)$, then $tx=ty$, and so $t^{-1}tx=e(t)x=x=y=e(t)y=t^{-1}ty$. Hence $\theta_{t}$ is one to one. On the other hand, for $x\in X$, there exists $t^{-1}x \in X$ such that $\theta_{t}(t^{-1}x)=tt^{-1}x=e(t)x=x$. Thus $\theta_{t}$ is also onto. If  $t\in T$ and $x\in X$, then
\begin{align*}
\theta_{t}\circ \theta_{t^{-1}}(x)= \theta_{tt^{-1}}(x)=\theta_{e(t)}(x)=e(t)x=x.
\end{align*}
The last equity follows from the fact $e(T) \subseteq T_{x}$. In the  same way
\begin{align*}
\theta_{t_{-1}}\circ \theta_{t}(x)= x.
\end{align*}
Therefore, every $\theta_{t}$ is a homeomorphism.
\end{proof}

\begin{theorem}\label{cm}
Let $(X,T,\lambda)$ be a $T$-space. If $T$ is compact and $e(T) \subseteq T_{x}$ for each $x\in X$, then $\lambda:T\times X\rightarrow X$ is a closed map.
\end{theorem}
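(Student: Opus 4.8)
The plan is to realize $\lambda$ as a composite $\lambda=\mathrm{pr}_{X}\circ\Phi$, where $\mathrm{pr}_{X}\colon T\times X\to X$ is the projection and $\Phi\colon T\times X\to T\times X$ is a homeomorphism, and then to invoke the classical fact that projection along a compact factor is a closed map. Since a homeomorphism is in particular a closed map and a composite of closed maps is closed, this will give the result immediately; equivalently, for $C\subseteq T\times X$ closed we will have $\lambda(C)=\mathrm{pr}_{X}(\Phi(C))$ closed in $X$.

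First I would introduce $\Phi(t,x)=(t,tx)$. It is continuous because its first coordinate is a projection and its second coordinate is the action $\lambda$. I claim $\Phi$ is a homeomorphism with inverse $\Psi(t,y)=(t,t^{-1}y)$; here $\Psi$ is continuous since inversion $m_{1}$ and $\lambda$ are continuous, so $(t,y)\mapsto(t^{-1},y)\mapsto t^{-1}y$ is continuous. That $\Psi$ and $\Phi$ are mutually inverse is checked by the same manipulations as in the proof of Theorem~\ref{homeo}: using property (i) of a generalized action and $t^{-1}\cdot t=t\cdot t^{-1}=e(t)$, one gets $\Psi(\Phi(t,x))=(t,t^{-1}(tx))=(t,(t^{-1}\cdot t)x)=(t,e(t)x)=(t,x)$ and $\Phi(\Psi(t,y))=(t,t(t^{-1}y))=(t,(t\cdot t^{-1})y)=(t,e(t)y)=(t,y)$, where the last equality in each display uses the hypothesis $e(T)\subseteq T_{x}$, which guarantees $e(t)x=x$ for all $t\in T$, $x\in X$. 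This is the only place the hypothesis on the identities is needed, and it is the main (though mild) point to get right.

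Next I would recall the standard lemma that if $K$ is compact then $\mathrm{pr}\colon X\times K\to X$ is closed: given a closed $C\subseteq X\times K$ and $x\notin\mathrm{pr}(C)$, the slice $\{x\}\times K$ lies in the open set $(X\times K)\setminus C$, so by compactness of $K$ it is covered by finitely many basic open boxes contained in $(X\times K)\setminus C$, and the intersection $U$ of their $X$-factors is a neighbourhood of $x$ with $(U\times K)\cap C=\varnothing$, i.e. $U\cap\mathrm{pr}(C)=\varnothing$. Applying this with $K=T$ (compact by hypothesis), the projection $\mathrm{pr}_{X}\colon T\times X\to X$ is closed. Since $\mathrm{pr}_{X}(\Phi(t,x))=\mathrm{pr}_{X}(t,tx)=tx=\lambda(t,x)$, we conclude that $\lambda=\mathrm{pr}_{X}\circ\Phi$ is a composite of a homeomorphism and a closed map, hence closed. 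The expected obstacle is not any single hard step but simply making sure the homeomorphism $\Phi$ is set up correctly so that the compactness argument for the projection can be applied verbatim.
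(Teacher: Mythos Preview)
Your proof is correct and takes a genuinely different route from the paper. The paper argues sequentially: given closed $C\subseteq T\times X$ and a point $x$ in the closure of $\lambda(C)$, it picks a sequence $(t_{i},x_{i})\in C$ with $t_{i}x_{i}\to x$, passes to a subsequence with $t_{i}\to t$ by compactness of $T$, and then uses continuity of inversion and of $\lambda$ together with $e(t_{i})x_{i}=x_{i}$ to deduce $x_{i}\to t^{-1}x$, whence $(t,t^{-1}x)\in C$ and $x=e(t)x\in\lambda(C)$. Your shear-plus-projection factorization $\lambda=\mathrm{pr}_{X}\circ\Phi$ is cleaner and, more to the point, avoids the implicit first-countability assumption hidden in the paper's passage from ``$x$ is a limit point'' to ``there exists a sequence converging to $x$''; nothing in the definition of a $T$-space forces closures in $X$ to be sequential. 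Both arguments use the same algebraic input---the identity $e(t)x=x$ coming from $e(T)\subseteq T_{x}$---but you invoke it once, in checking $\Phi$ and $\Psi$ are mutual inverses, whereas the paper applies it twice at the end of its sequence chase. The trade-off is that you import the tube lemma as a black box while the paper stays hands-on; your approach, however, works verbatim for arbitrary (non-metrizable) $T$-spaces.
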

\begin{proof}
Assume $C$ is a closed subset of $T\times X$ and $x \in X$ is a limit point of $\lambda(C)$. So  there exists a sequence $\{(t_{i},x_{i})\}$ in $C$ such that $\lambda(t_{i},x_{i})=t_{i}x_{i}$ converges to $x$. As $T$ is compact, then there is a subsequence of $\{t_{i}\} $ such that converges to a $t$ in $T$. We rename that subsequence be $\{t_{i}\} $. $T$ is a topological generalized group, so the map $ m_{1}:T\rightarrow T $,  defined by $t\mapsto t^{-1} $ is continuous. This implies that $\{t_{i}^{-1}\} $ converges to $t^{-1}$. $\lambda$ is also continuous, $\lambda(t_{i}^{-1},t_{i}x_{i})$ converges to $\lambda(t^{-1},x)$, so $\{e(t_{i})x_{i}\}$ converges to $t^{-1}x$. But for each $x\in X$, $e(T) \subseteq T_{x}$, thus $e(t_{i})x_{i}=x_{i}$ and consequently, $x_{i}$ converges to $t^{-1}x$. So the sequence $\{(t_{i},x_{i})\} $ in $C$ converges to $(t,t^{-1}x)$. Since $C$ is a closed subset of $T\times X$, then $(t,t^{-1}x) \in C$. Therefore, $\lambda(t,t^{-1}x)=e(t)x \in \lambda(C)$. According to the assumption, $e(t)x=x$. So $x\in \lambda(C)$ which means that $\lambda(C)$ is closed, that is, $\lambda$ is a closed map.
\end{proof}

Let $(X,T,\lambda)$ be a $T$-space. If $Y$ is a subset of $X$, then $$TY:=\{ty|\quad t\in T \quad \text{and} \quad y\in Y\}.$$ $Y$ is called \textit{invariant} under $T$ if $TY=Y$.

\begin{cor}\label{coro2}
Let $(X,T,\lambda)$ be a $T$-space. Moreover $T$ be compact, $Y\subseteq X$ and for each $x\in X$, $e(T) \subseteq T_{x}$, then
\begin{enumerate}[(i)]
\item  $TY$ is closed if $Y$ is closed;
\item  $TY$ is compact if $Y$ is compact.
\end{enumerate}
\end{cor}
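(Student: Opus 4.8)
The plan is to recognize $TY$ as the image of a suitable subset of $T\times X$ under the action map $\lambda$, and then to invoke the two facts already available under the standing hypotheses ($T$ compact and $e(T)\subseteq T_x$ for every $x\in X$): namely that $\lambda$ is continuous, and that $\lambda$ is a closed map by Theorem~\ref{cm}. The starting observation is simply that, directly from the definition, $TY=\lambda(T\times Y)$.

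For part (i), suppose $Y$ is closed in $X$. Since $T$ is a closed subset of $T$ (the whole space) and $Y$ is closed in $X$, the product $T\times Y$ is closed in $T\times X$ in the product topology. Theorem~\ref{cm} applies because $T$ is compact and $e(T)\subseteq T_x$ for each $x\in X$, so $\lambda$ is a closed map; hence $\lambda(T\times Y)=TY$ is closed in $X$. For part (ii), suppose $Y$ is compact. Then $T\times Y$ is compact, being a finite product of compact spaces, and since $\lambda$ is continuous, $TY=\lambda(T\times Y)$ is the continuous image of a compact space, therefore compact.

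I do not expect any genuine obstacle: part (ii) is immediate from continuity of $\lambda$ and compactness of finite products, while part (i) is an immediate consequence of Theorem~\ref{cm}. The only point deserving a moment's care is the remark that $T\times Y$ is closed in $T\times X$ whenever $Y$ is closed in $X$ — this uses that $T$ is (trivially) a closed subset of itself — after which the closed-map property does all the work.
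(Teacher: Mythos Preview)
Your proposal is correct. The paper states this result as a corollary of Theorem~\ref{cm} without giving a separate proof, and your argument---writing $TY=\lambda(T\times Y)$, then invoking Theorem~\ref{cm} for part~(i) and continuity of $\lambda$ together with Tychonoff for part~(ii)---is precisely the intended derivation.
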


\begin{theorem}
\label{om}
Let $(X,T,\lambda)$ be a $T$-space and for each $x\in X$, $e(T) \subseteq T_{x}$. Then the projection map $\pi:X\rightarrow \bigslant{X}{T}$ is an open map.
\end{theorem}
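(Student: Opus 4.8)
The plan is to use the standard criterion for a quotient map to be open: $\pi$ is open if and only if for every open $U \subseteq X$ the saturation $\pi^{-1}(\pi(U))$ is open in $X$, since then $\pi(U)$ is open by the very definition of the quotient topology on $\bigslant{X}{T}$. So I would fix an arbitrary open set $U \subseteq X$ and concentrate on identifying $\pi^{-1}(\pi(U))$ explicitly.

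The key step is the set equality
\[
\pi^{-1}(\pi(U)) \;=\; TU \;=\; \bigcup_{t\in T}\theta_{t}(U).
\]
A point $y$ belongs to $\pi^{-1}(\pi(U))$ exactly when $y\sim u$ for some $u\in U$. Here the hypothesis $e(T)\subseteq T_{x}$ for every $x\in X$ is used: it makes $\sim$ genuinely symmetric, because if $tx=y$ then $t^{-1}y=t^{-1}tx=e(t)x=x$. Hence $y\sim u$ is equivalent to the existence of $t\in T$ with $tu=y$, i.e. $y\in\theta_{t}(U)$ for some $t$. This gives the displayed equality.

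Now I would invoke Theorem~\ref{homeo}: under the assumption $e(T)\subseteq T_{x}$ for each $x$, every $\theta_{t}$ is a homeomorphism of $X$, in particular an open map, so each $\theta_{t}(U)$ is open in $X$. Therefore $\pi^{-1}(\pi(U))=\bigcup_{t\in T}\theta_{t}(U)$ is a union of open sets, hence open; consequently $\pi(U)$ is open in $\bigslant{X}{T}$, and since $U$ was arbitrary, $\pi$ is an open map. I do not expect a serious obstacle here: the only point demanding care is the identification $\pi^{-1}(\pi(U))=\bigcup_{t}\theta_{t}(U)$, where one must use the $e(T)\subseteq T_{x}$ hypothesis to guarantee that $\theta_{t^{-1}}$ inverts $\theta_{t}$ (equivalently, that $\sim$ is symmetric); the remainder is a direct application of Theorem~\ref{homeo}.
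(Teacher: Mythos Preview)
Your proposal is correct and follows essentially the same route as the paper: both identify $\pi^{-1}(\pi(U))=\bigcup_{t\in T}\theta_{t}(U)$ and then invoke Theorem~\ref{homeo} to conclude that each $\theta_{t}(U)$ is open, hence so is the union. Your version is slightly more careful in justifying the set equality via the symmetry of $\sim$ (which indeed relies on $e(T)\subseteq T_{x}$), a point the paper leaves implicit.
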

\begin{proof}
To prove that $\lambda $ is an open map, suppose $Y \subseteq X$ is an arbitrary open subset of $X$. We have \begin{equation}
\label{eq2}
 TY=\{ty~|~t\in T \text{ and }y\in Y\}= \pi^{-1}(\pi(Y))=  \bigcup_{t \in T} tY.
\end{equation}
Since  $\theta_{t}$ is a homeomorphism, then it is an open map. So for each $t\in T$, $\theta_{t}(Y)=tY$ is an open set in $X$. Hence $ \pi^{-1}(\pi(Y)) $  is an open set in $X$  (from \eqref{eq2}). So $\pi(Y)$ is open in $ \bigslant{X}{T}$, which means that $\pi$ is an open map.
\end{proof}

\begin{mydef}
Suppose  $(X,T,\lambda)$ is a $T$-space. A generalized action $\lambda$ is called \textit{perfect} if $e(T) \subseteq T_{x}$ for each $x\in X$. In this sense, the $T$-space is called \textit{perfect}.
\end{mydef}

Suppose  that $X$ and $Y$ are two topological spaces. A mapping $f:X\rightarrow Y $ is called proper if for each compact subset $A$ of  $Y$, $f^{-1}(A)$ is a compact subset of $X$ \cite{Topol}. We know that if the mapping $f:X\rightarrow Y $ is closed and  $f^{-1}(y)$ is compact for each $y\in Y$, then $f$ is a proper map \cite{Topol}.
\begin{theorem}
Let $(X,T,\lambda)$ be a $T$-space,  $T$ be compact  and  $ \lambda $ be perfect. Then
\begin{enumerate}[\bf (i)]
\item The projection map $\pi:X\rightarrow \bigslant{X}{T}$ is a closed map;
\item The quotient space $ \bigslant{X}{T}$ is Hausdorff;
\item The projection map $\pi:X\rightarrow \bigslant{X}{T}$ is a proper map;
\item $X$ is compact if and only if $\bigslant{X}{T}$ is compact;
\item  $X$ is  locally compact if and only if $\bigslant{X}{T}$ is  locally compact.
 \end{enumerate}
 \end{theorem}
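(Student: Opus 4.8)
The plan is to get (i) straight out of Corollary~\ref{coro2} and then bootstrap the rest from it, together with the openness of $\pi$ (Theorem~\ref{om}) and the compactness of every orbit. For (i): if $C\subseteq X$ is closed, then $\pi^{-1}(\pi(C))=TC$ is closed by Corollary~\ref{coro2}(i) (here $T$ is compact and $\lambda$ is perfect), so $\pi(C)$ is closed by the definition of the quotient topology; thus $\pi$ is a closed map. For (iii): each fiber $\pi^{-1}(p)$ is an orbit $T(x)=\rho_{x}(T)$, i.e.\ the continuous image of the compact space $T$, hence compact; a closed map with compact fibers is proper by the criterion recalled just before the theorem, so $\pi$ is proper.

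The genuine work is (ii). Given distinct $p,q\in\bigslant{X}{T}$, their fibers are disjoint compact subsets of the Hausdorff space $X$, so I would pick disjoint open sets $U\supseteq\pi^{-1}(p)$ and $V\supseteq\pi^{-1}(q)$. Since $\pi$ is closed (by (i)), the sets $W_{p}=\bigslant{X}{T}\setminus\pi(X\setminus U)$ and $W_{q}=\bigslant{X}{T}\setminus\pi(X\setminus V)$ are open; $p\in W_{p}$ because $\pi^{-1}(p)\subseteq U$, similarly $q\in W_{q}$, and $W_{p}\cap W_{q}=\emptyset$ since a common point would have its nonempty fiber contained in $U\cap V=\emptyset$. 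Hence $\bigslant{X}{T}$ is Hausdorff.

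Parts (iv) and (v) are then formal. For (iv): if $X$ is compact so is $\pi(X)=\bigslant{X}{T}$; conversely, if $\bigslant{X}{T}$ is compact then $X=\pi^{-1}(\bigslant{X}{T})$ is compact by (iii). For (v): if $X$ is locally compact and $p=\pi(x)$, take a compact neighborhood $K$ of $x$ with an open $O$ satisfying $x\in O\subseteq K$; then $\pi(O)$ is open by Theorem~\ref{om}, $p\in\pi(O)\subseteq\pi(K)$, and $\pi(K)$ is compact, so $\pi(K)$ is a compact neighborhood of $p$. Conversely, if $\bigslant{X}{T}$ is locally compact and $x\in X$, take a compact neighborhood $L$ of $\pi(x)$ with an open $V$ satisfying $\pi(x)\in V\subseteq L$; then $x\in\pi^{-1}(V)\subseteq\pi^{-1}(L)$, $\pi^{-1}(V)$ is open, and $\pi^{-1}(L)$ is compact by (iii), so $X$ is locally compact.

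I expect the only real obstacle to be (ii): one must upgrade separation of the compact orbits inside $X$ to separation of points in $\bigslant{X}{T}$, and the device that makes this go through is precisely that $\pi$ is a closed map, so the complement of a sufficiently saturated open set pushes forward to a closed set. It is worth noting where perfectness is used: it is exactly what makes $\pi$ open (Theorem~\ref{om}) and, via compactness of $T$, what makes every orbit compact; once (i)--(iii) are in hand, (iv) and (v) are routine transfers of compactness and of neighborhoods across the continuous, open, closed, proper surjection $\pi$.
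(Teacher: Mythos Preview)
Your proof is correct and follows the same overall architecture as the paper: (i) via Corollary~\ref{coro2}, (iii) from closedness plus compact fibers, and (iv)--(v) as formal consequences. The one place you genuinely diverge is (ii). The paper separates the compact orbit $T(x)$ from the \emph{point} $y$, obtaining disjoint open $U\supseteq T(x)$ and $V\ni y$, and then takes $\pi(V)$ (open because $\pi$ is open, Theorem~\ref{om}) and $\bigslant{X}{T}\setminus\pi(\overline{V})$ (open because $\pi$ is closed) as the separating neighborhoods. You instead separate the two compact orbits $\pi^{-1}(p)$ and $\pi^{-1}(q)$ simultaneously and use the standard ``shrink through a closed map'' device $W_{p}=\bigslant{X}{T}\setminus\pi(X\setminus U)$, which needs only the closedness of $\pi$ and not its openness. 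Your route is the textbook argument that any closed surjection with compact fibers from a Hausdorff space has Hausdorff image; the paper's route is a bit more ad hoc but leans on the openness of $\pi$ that is already available. Either way the essential inputs are the same: compact orbits and (i).

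One small inaccuracy in your closing remarks: compactness of orbits does not require perfectness at all, only that $T$ is compact and $\rho_{x}$ is continuous. Perfectness enters through Corollary~\ref{coro2} (hence (i)) and through Theorem~\ref{om} (hence the forward direction of (v)); your proof logic does not actually use the misstated claim, so this does not affect correctness.
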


 \begin{proof}
\begin{enumerate}[(i)]

\item Suppose that $Z \subseteq X$ is an arbitrary closed subset of $X$. As $ \lambda $ is perfect and $T$ is compact,  according to Theorem  \ref{coro2}, $ TZ= \pi^{-1}(\pi(Z)) $ is closed in $X$. Since $\pi$ is surjective map, we  can see that \begin{align}
\label{eq1}
\pi^{-1}(\bigslant{X}{T}-\pi(Z))= X- \pi^{-1}(\pi(Z)).
\end{align}
Since $ \pi^{-1}(\pi(Z)) $ is closed in $X$, then $X- \pi^{-1}(\pi(Z)) $ is open in $X$.  $ \bigslant{X}{T}-\pi(Z) $ is open in $\bigslant{X}{T}$. Thus $\pi(Z)$ is closed in $\bigslant{X}{T}$, hence $\pi$ is a closed map.
\item Suppose that $[x]$  and $ [y]$ are two distinct elements in $\bigslant{X}{T}$. Clearly, we can see that $T(x)\cap T(y)=\varnothing$. As mentioned, $T(x)=\rho_{x}(T)$, that is, the $T$-orbit $T(x)$ is the image of $T$. But we know that $\rho_{x}$ is continuous and $T$ is compact, which imply that  $T(x)=\pi^{-1}([x])$ is compact in $X$. In the same way, $T(y)$ is compact. Since $X$ is Hausdorff and $T(x)$ is compact in $X$ and $y\notin T(x)$, thus there exist disjoint open subsets $U$ and $V$ of $X$ containing $T(x)$  and $y$, respectively \cite{Topol}. Especially, it is  easy to see that $ T(x) \cap \overline{V} = \varnothing$. So $[x]=\pi(x)\notin \pi (\overline{V})$. On the other hand, $\pi (V)$ is open in $\bigslant{X}{T}$. $\pi (V)$ contains of $[y]$.   Moreover; since $\overline{V}$ is closed, $\pi (\overline{V})$ is closed in $\bigslant{X}{T}$. Thus $\bigslant{X}{T} - \pi (\overline{V}) $ is an open subset that contains $[x]$. Therefore $\bigslant{X}{T} - \pi (\overline{V}) $ and $\pi (V)$  are disjoint open sets of $\bigslant{X}{T}$ containing $[x]$  and $[y]$, respectively.
\item In (i), we saw that $\pi$ is closed map and also in (ii), we shown that $T(x)=\pi^{-1}([x])$ is compact for each $x \in x$. So $\pi$ is a proper map.
\item This follows easily from  continuity of $\pi$ and (iii).
\item This follows from Theorem \ref{om} and (iii).
\end{enumerate}\end{proof}

\begin{mydef}
Let $(X,T,\lambda)$ be a $T$-space. The generalized action $\lambda$ is called \textit{proper} if the map $\hat{\lambda}: T\times X\to X\times X$ defined by $\hat{\lambda}(t,x)=(tx,x)$ is a proper map.
\end{mydef}

\begin{theorem}
Let $(X,T,\lambda)$ is a $T$-space. The generalized action $\lambda$ is proper if and only if for every compact subset $Y$ of $X$, the set $Y_{T}=\{t\in T~|~ tY\cap Y \neq \varnothing\}$ is compact.
\end{theorem}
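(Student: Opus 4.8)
The plan is to reduce everything to the elementary identity
\begin{equation*}
Y_{T}=\mathrm{pr}_{T}\bigl(\hat{\lambda}^{-1}(Y\times Y)\bigr),
\end{equation*}
where $\mathrm{pr}_{T}\colon T\times X\to T$ is the coordinate projection. First I would record this identity: $t\in Y_{T}$ means $tY\cap Y\neq\varnothing$, i.e. there is $x\in Y$ with $tx\in Y$, i.e. $(t,x)\in\hat{\lambda}^{-1}(Y\times Y)$ for some $x$; thus $Y_{T}$ is exactly the $\mathrm{pr}_{T}$-image of $\hat{\lambda}^{-1}(Y\times Y)$. I would also note at the outset the purely topological facts to be used — products and continuous images of compact sets are compact, a compact subset of a Hausdorff space is closed, and a closed subset of a compact set is compact — all available because $X$ (hence $X\times X$) and $T$ (hence $T\times X$) are Hausdorff. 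The generalized group structure of $T$ plays no further role; neither inverses nor the identity map $e$ enter, and continuity of $\hat{\lambda}$ is immediate from continuity of $\lambda$.

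For the direction ``$\lambda$ proper $\Rightarrow$ $Y_{T}$ compact'', given a compact $Y\subseteq X$ I would form $Y\times Y$, which is compact and (being closed in the Hausdorff space $X\times X$) has compact preimage $\hat{\lambda}^{-1}(Y\times Y)$ by properness of $\hat{\lambda}$; then $Y_{T}=\mathrm{pr}_{T}\bigl(\hat{\lambda}^{-1}(Y\times Y)\bigr)$ is compact as a continuous image of a compact set.

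For the converse, given a compact $K\subseteq X\times X$ I would set $Y=\mathrm{pr}_{1}(K)\cup\mathrm{pr}_{2}(K)$, a compact subset of $X$ (a union of two continuous images of $K$), so that $K\subseteq Y\times Y$ and hence $\hat{\lambda}^{-1}(K)\subseteq\hat{\lambda}^{-1}(Y\times Y)$. The key inclusion is $\hat{\lambda}^{-1}(Y\times Y)\subseteq Y_{T}\times Y$: if $x\in Y$ and $tx\in Y$ then $tx\in tY\cap Y$, so $t\in Y_{T}$. Now $Y_{T}\times Y$ is compact by hypothesis, while $\hat{\lambda}^{-1}(Y\times Y)$ is closed in $T\times X$ (the preimage of the closed set $Y\times Y$ under the continuous map $\hat{\lambda}$), hence closed in $Y_{T}\times Y$, hence compact; and $\hat{\lambda}^{-1}(K)$, being closed in $T\times X$ (preimage of the closed set $K$) and contained in this compact set, is itself compact. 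Therefore $\hat{\lambda}$ is proper.

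I do not anticipate a genuine obstacle, as this is essentially the classical characterization of proper actions. The only points that need care are the verification of the displayed identity and of the inclusion $\hat{\lambda}^{-1}(Y\times Y)\subseteq Y_{T}\times Y$, together with the bookkeeping of which sets are closed — which is precisely where the Hausdorff hypothesis built into the definition of a $T$-space is used.
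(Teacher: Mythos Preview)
Your proposal is correct and follows essentially the same route as the paper: the identity $Y_{T}=\mathrm{pr}_{T}\bigl(\hat{\lambda}^{-1}(Y\times Y)\bigr)$ for the forward direction, and for the converse setting $Y=\mathrm{pr}_{1}(K)\cup\mathrm{pr}_{2}(K)$, using the inclusion $\hat{\lambda}^{-1}(K)\subseteq\hat{\lambda}^{-1}(Y\times Y)\subseteq Y_{T}\times Y$, and concluding via ``closed subset of a compact set is compact.'' Your write-up is in fact more careful than the paper's in verifying the key identity and inclusion and in tracking where the Hausdorff hypotheses enter.
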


\begin{proof}
Suppose that the mapping $\hat{\lambda}$ is proper and $Y\subset X$ is compact. We see that $Y_{T}=P(\hat{\lambda}^{-1}(Y\times Y))$ where $P:T\times X\to T$ is the projection map. So $Y_{T}$ is compact. Conversely, Suppose that $Y_{T}$ is compact, where $Y$ is a compact subset of $X$. If $Z\subset X\times X$ is compact, and $Y:=\pi_{1}(Z)\cup\pi_{2}(Z)\subset X$, where $\pi_{1},\pi_{2}:X\times X\to X$ are the projections on the first and second components, respectively, then $Y$ is compact. Moreover $\hat{\lambda}^{-1}(Z)\subset \hat{\lambda}^{-1}(Y\times Y)\subset Y_{T}\times Y$. Since $ X\times X$ is Hausdorff and $Z$ is compact, then $Z$ is  a closed subset of $ X\times X$. So $\hat{\lambda}^{-1}(Z)$ is closed by continuity and it is also a closed subset of the compact set $Y_{T}\times Y$. Thus $Z$ is compact and $\lambda$ is proper.
\end{proof}
\begin{cor}
Let $(X,T,\lambda)$ be a $T$-space. If $T$ is compact, then the generalized action $\lambda$ is proper.
\end{cor}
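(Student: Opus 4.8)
The plan is to invoke the characterization established in the preceding theorem: the generalized action $\lambda$ is proper if and only if $Y_{T}=\{t\in T~|~tY\cap Y\neq\varnothing\}$ is compact for every compact subset $Y$ of $X$. So fix a compact $Y\subseteq X$ and show $Y_{T}$ is compact, using only that $T$ is compact and that $X$ (hence $X\times X$) is Hausdorff.

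First I would record the inclusion $\hat{\lambda}^{-1}(Y\times Y)\subseteq T\times Y$: indeed, if $(t,x)$ lies in this preimage then $\hat{\lambda}(t,x)=(tx,x)\in Y\times Y$, so in particular $x\in Y$. Next, since $X$ is Hausdorff and $Y$ is compact, $Y$ is closed in $X$, so $Y\times Y$ is closed in $X\times X$; by continuity of $\hat{\lambda}$, the set $\hat{\lambda}^{-1}(Y\times Y)$ is closed in $T\times X$, and therefore it is a closed subset of $T\times Y$. Because $T$ and $Y$ are both compact, $T\times Y$ is compact, so $\hat{\lambda}^{-1}(Y\times Y)$ is a closed subset of a compact space, hence compact.

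Finally, as was observed in the proof of the previous theorem, $Y_{T}=P\bigl(\hat{\lambda}^{-1}(Y\times Y)\bigr)$, where $P:T\times X\to T$ is the projection onto the first factor. Thus $Y_{T}$ is the continuous image of the compact set $\hat{\lambda}^{-1}(Y\times Y)$, so $Y_{T}$ is compact. By the characterization theorem, $\lambda$ is proper.

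I do not expect any genuine obstacle here: the compactness of $T$ forces $\hat{\lambda}^{-1}(\text{anything bounded in the }X\text{-coordinate})$ to sit inside a compact box $T\times Y$, and the only point needing a little care is the use of the Hausdorff property of $X$ to guarantee that $Y$ (and hence $Y\times Y$) is closed, so that the preimage is closed and the closed-subset-of-compact argument applies. (Alternatively, one could bypass the characterization and argue directly that for compact $Z\subseteq X\times X$, setting $Y=\pi_{1}(Z)\cup\pi_{2}(Z)$ gives $\hat{\lambda}^{-1}(Z)$ closed in the compact set $T\times Y$, hence compact; this is the same idea with one fewer citation.)
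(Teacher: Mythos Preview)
Your proof is correct and is precisely the intended route: the paper states this as an immediate corollary of the preceding characterization theorem (without writing out a proof), and you have filled in the details by showing $Y_{T}=P\bigl(\hat{\lambda}^{-1}(Y\times Y)\bigr)$ is compact via the closed-in-compact argument inside $T\times Y$. The only remark is that your alternative sketch at the end (bypassing the characterization and working directly with an arbitrary compact $Z\subseteq X\times X$) is equally valid and arguably slightly more self-contained, but both are the same idea.
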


\begin{cor}
Let $(X,T,\lambda)$ be a $T$-space. If $\lambda$ is proper, then
\begin{enumerate}[(i).]
\item The stabilizer $T_{x}$ is compact, where $x\in X$;
\item The orbit map $\rho_{x}$ is proper, where $x\in X$.
\end{enumerate}
\end{cor}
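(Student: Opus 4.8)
The plan is to read off both statements directly from the defining property of a proper generalized action --- that $\hat\lambda:T\times X\to X\times X$, $\hat\lambda(t,x)=(tx,x)$, is a proper map --- by feeding it suitably chosen compact subsets of $X\times X$ and then pushing the resulting compact preimages forward along the continuous projection $T\times X\to T$. Throughout one uses that $X$ and $T$ are Hausdorff, so that finite subsets are closed and closed subsets of compacta are compact.

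For (i), fix $x\in X$. The singleton $\{(x,x)\}$ is compact in $X\times X$, hence $\hat\lambda^{-1}(\{(x,x)\})$ is compact in $T\times X$ by properness. A direct computation gives $\hat\lambda^{-1}(\{(x,x)\})=\{(t,y)\in T\times X:\ ty=x\ \text{and}\ y=x\}=T_{x}\times\{x\}$. Since $T_{x}\times\{x\}$ is homeomorphic to $T_{x}$ --- equivalently, $T_{x}$ is the image of this compact set under the continuous projection onto $T$ --- the stabilizer $T_{x}$ is compact.

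For (ii), fix $x\in X$ and let $K\subseteq X$ be an arbitrary compact set; I must show $\rho_{x}^{-1}(K)=\{t\in T:\ tx\in K\}$ is compact. Here $K\times\{x\}$ is a compact subset of $X\times X$, so $\hat\lambda^{-1}(K\times\{x\})$ is compact by properness, and a direct computation identifies it with $\rho_{x}^{-1}(K)\times\{x\}$. Projecting onto the first factor shows $\rho_{x}^{-1}(K)$ is compact, so $\rho_{x}$ is proper. Alternatively one can argue from the previous theorem: put $Y=K\cup\{x\}$, which is compact; if $tx\in K$ then $tx\in tY\cap Y$, so $\rho_{x}^{-1}(K)\subseteq Y_{T}$, and since $\rho_{x}$ is continuous and $K$ is closed (as $X$ is Hausdorff), $\rho_{x}^{-1}(K)$ is a closed subset of the compact set $Y_{T}$, hence compact.

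There is essentially no serious obstacle here: the only points requiring care are the exact set-theoretic descriptions of the two preimages under $\hat\lambda$ and invoking the Hausdorff hypotheses at the right moment (so that the relevant singletons and compacta are closed, which the $Y_{T}$-based argument in (ii) needs).
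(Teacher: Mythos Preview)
Your proof is correct. The paper states this result as a corollary without supplying any argument, so there is no proof to compare against; your approach --- pulling back the compact sets $\{(x,x)\}$ and $K\times\{x\}$ along the proper map $\hat\lambda$ and projecting to $T$ --- is exactly the natural way to extract the statement from the definition, and is presumably what the authors had in mind.
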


\begin{cor}
Let $(X,T,\lambda)$ be a $T$-space. If $\lambda$ is proper, then $T(x)$ is closed subset of $X$ and the quotient space $\bigslant{X}{T}$ is Hausdorff.
\end{cor}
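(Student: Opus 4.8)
The plan is to reduce both assertions to the single statement that the set
$R=\hat{\lambda}(T\times X)=\{(tx,x)\mid t\in T,\ x\in X\}\subseteq X\times X$ is closed. Observe first that $R$ is exactly the graph of the equivalence relation defining $\bigslant{X}{T}$: one has $(z,x_{0})\in R$ iff $z\in T(x_{0})$, so that $(p,q)\in R$ iff $p\sim q$ iff $[p]=[q]$. Once $R$ is known to be closed, the rest is formal.

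The main step is to show that $R$ is closed, and here I follow the sequential style already used in Theorem~\ref{cm}. Let $(p,q)\in\overline{R}$ and pick $(t_{n},x_{n})\in T\times X$ with $t_{n}x_{n}\to p$ and $x_{n}\to q$. Set $K=\{p,q\}\cup\{x_{n}\mid n\in\mathbb{N}\}\cup\{t_{n}x_{n}\mid n\in\mathbb{N}\}$; this is a compact subset of $X$, being the union of two convergent sequences together with their limit points. Since $x_{n}\in K$ and $t_{n}x_{n}=\theta_{t_{n}}(x_{n})\in(t_{n}K)\cap K$, we get $t_{n}\in K_{T}=\{t\in T\mid tK\cap K\neq\varnothing\}$ for every $n$. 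As $\lambda$ is proper, the characterization of properness via the sets $Y_{T}$ gives that $K_{T}$ is compact, so a subsequence $t_{n_{j}}$ converges to a point $t_{0}\in K_{T}\subseteq T$. Then $(t_{n_{j}},x_{n_{j}})\to(t_{0},q)$, and continuity of $\lambda$ yields $t_{n_{j}}x_{n_{j}}\to t_{0}q$; but also $t_{n_{j}}x_{n_{j}}\to p$, and $X$ is Hausdorff, hence $p=t_{0}q$ and $(p,q)=\hat{\lambda}(t_{0},q)\in R$. Therefore $R$ is closed.

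The remaining two points follow quickly. For $x_{0}\in X$, the map $\iota_{x_{0}}:X\to X\times X$, $z\mapsto(z,x_{0})$, is continuous and $\iota_{x_{0}}^{-1}(R)=T(x_{0})$, so $T(x_{0})$ is closed (alternatively, one may invoke that $\rho_{x_{0}}$ is proper, by the preceding corollary). For the Hausdorff property, note first that $\pi$ is an open map: for open $U\subseteq X$ one has $\pi^{-1}(\pi(U))=\bigcup_{t\in T}\theta_{t}^{-1}(U)$, since $x\in\pi^{-1}(\pi(U))$ iff $tx\in U$ for some $t$, and each $\theta_{t}^{-1}(U)$ is open. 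Now let $[p]\neq[q]$ in $\bigslant{X}{T}$, so $(p,q)\notin R$; since $R$ is closed and $X\times X$ is Hausdorff, there are open $U\ni p$ and $V\ni q$ in $X$ with $(U\times V)\cap R=\varnothing$. If $\pi(U)\cap\pi(V)\neq\varnothing$ we would obtain $u\in U$, $v\in V$ with $u\sim v$, hence $(u,v)\in R$, a contradiction; thus $\pi(U)\cap\pi(V)=\varnothing$, and as $\pi$ is open these are disjoint open neighbourhoods of $[p]$ and $[q]$. Hence $\bigslant{X}{T}$ is Hausdorff.

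I expect the main obstacle to be the closedness of $R$ in the second step. The notion of properness used here ("preimages of compacta are compact") is, for a general Hausdorff target, weaker than "closed map with compact fibres", so one cannot simply assert that the proper map $\hat{\lambda}$ is a closed map. The argument above sidesteps this by extracting a convergent subsequence of $(t_{n})$ from the compact set $K_{T}$ supplied by the properness criterion; this is rigorous in the first‑countable setting implicit in the paper's sequential arguments (as in Theorem~\ref{cm}), and in full generality it would be natural to add a local compactness assumption on $X$.
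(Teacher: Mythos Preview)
The paper states this result as a corollary without proof, so there is no argument to compare against directly; your proposal in fact supplies the missing details. Within the sequential framework the paper itself adopts (cf.\ the proof of Theorem~\ref{cm}) your argument is correct and well organised: reducing both conclusions to the closedness of $R=\hat{\lambda}(T\times X)$, then reading off $T(x_{0})=\iota_{x_{0}}^{-1}(R)$ and invoking the standard ``closed equivalence relation plus open quotient map implies Hausdorff quotient'' principle, is the natural and efficient route. Two points are worth noting. First, your proof that $\pi$ is open via $\pi^{-1}(\pi(U))=\bigcup_{t\in T}\theta_{t}^{-1}(U)$ uses only the continuity of each $\theta_{t}$ and therefore, unlike the paper's Theorem~\ref{om}, does not require the perfect hypothesis; this is a small but genuine improvement. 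Second, your closing caveat is accurate: the sequential extraction of a convergent subsequence from $K_{T}$ is legitimate in the first-countable setting implicit in the paper, while under a local-compactness assumption on $X$ one could instead argue directly that the proper map $\hat{\lambda}$ is closed and avoid sequences altogether.
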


\begin{exa}
Let $X= \mathbb{R}^2$ and let $T$ be the generalized group of Example \ref{e1}  which acts on $ X$ by
$$ (r_{1}e^{i\theta_{1}}).(r_{2}e^{i\theta_{2}})=r_{1}r_{2}e^{i\theta_{2}}. $$
The equivalence class $[x]$ is the set all $re^{i\theta_{0}}$ such that $r>0$, where $x=r_{0}e^{i\theta_{0}}\in \mathbb{R}^2-\{(0,0)\}$  and $[(0,0)]=\{(0,0)\}$. Moreover, for $x=r_{1}e^{i\theta_{1}}$ and $y=r_{2}e^{i\theta_{2}}$, $[x]=[y]$ if and only if $\theta_{1}=\theta_{2}$. So $Y=\bigslant{X}{T}\simeq S^{1}\bigcup \{(0,0)\}$ which is not a Hausdorff space with the quotient topology.

\end{exa}

\section{Maps On $T$-spaces}

Now, let us consider the maps of $T$-spaces. First, we recall the notion of transitivity of maps on a topological space. Let $X$ be a topological space and let $f:X\rightarrow X$  be a continuous mapping. Then the mapping $f$ is called \textit{ (topologically) transitive} [11] if for every pair of non-empty open subsets $U$ and $V$ of X, there is some $n \in  \mathbb{N}$ such that $f^{n}(U)\cap V \neq \varnothing$. Moreover, if $X$ is compact, then $f$ is transitive if and only if $f$ is onto and there is a point in $X$ with dense orbit \cite{transitivity}.
\begin{mydef}
Let $(X,T,\lambda)$ be a $T$-space. Suppose $f:X\rightarrow X$ is a continuous map. The map $f$ is called \textit{$T$-transitive} if for every pair of non-empty open subsets $U$ and $V$ in $X$, there is some positive integer $n$ and $t\in T$ such that $tf^{n}(U)\cap V \neq \varnothing$.
\end{mydef}

We can see that if $\lambda$ is not trivial and $f$ is a transitive map, then $f$ is $T$-transitive. But the fallowing example shows that each $T$-transitive map need not be transitive.
\begin{exa}
Let $X$ be  the topological space $[-1,1]$ with the topology generated by  Euclidean Metric. The topological generalized group $T=\{\pm 1 \}$ with the multiplication $s\cdot t=s|t|$ acts on $X$ by the generalized action $tx=t|x|$, where $t\in T$ and $x\in X$.  Then $f:X\rightarrow X $ is defined by
$$f(x)= \left \{ \begin{array}{lll}
  \sqrt{x} & \quad \text{if}& 0 \leq x \leq 1  \\
 -\sqrt{|x|} & \quad \text{if}& -1 \leq x \leq 0
  \end{array} \right.$$
We can see that $f$ is $T$-transitive but  it is not transitive.
\end{exa}

\begin{mydef}
Let $(X,T,\lambda)$ and $(Y,T,\mu)$ be two $T$-spaces. A continuous map $f:X\rightarrow Y$ is called
\begin{enumerate}[(i)]
\item \textit{$T$-equivariant} if $f(\lambda(t,x))=\mu(t,f(x))$ for $t\in T$ and $x\in X$, briefly, $f(tx)=tf(x)$;
\item \textit{$T$-pseudoequivariant} if $f(T(x))=T(f(x) )$ for $x\in X$.
\end{enumerate}
\end{mydef}
If $f:X\rightarrow Y$ is a $T$-equivariant between two $T$-spaces $X$ and $Y$, then $T_{x}\subseteq T_{f(x)}$  for each $x\in X$. Moreover, an $T$-equivariant map is clearly $T$-pseudoequivariant but the converse is not true. The following example shows this.
\begin{exa}
Let $X$ and $Y$ be two Euclidean space $\mathbb{R}$ and  $T=\{\pm 1\} $. Then $T$ with the multiplication $s\cdot t=s|t|$ is a topological generalized group. We define actions $\lambda,\theta:T\times \mathbb{R}\rightarrow \mathbb{R}$ by $\lambda(t,x)=t|x|$ and $\theta(t,x)=tx$, respectively. Now, we see that the identity map on $\mathbb{R}$ is a $T$-pseudoequivariant map which is not $T$-equivariant.
\end{exa}
\begin{theorem}
Let $(X,d)$ be a compact metric space with no isolated point and let $(X,T,\lambda)$ be a  $T$-space which $\lambda$ is perfect. Suppose $f:X\rightarrow X$ be a $T$-pseudoequivariant onto map. Then $f$ is $T$-transitive if and only if there exists $x \in X$ such that $\{ t\cdot f^n(x)~|~t\in T, n>o\}$ is dense in $X$.
\end{theorem}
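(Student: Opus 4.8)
The plan is to adapt the classical Birkhoff transitivity theorem, letting the topological generalized group do only bookkeeping. Perfectness of $\lambda$ will be used twice: by Theorem~\ref{homeo} each $\theta_t$ is a homeomorphism, so images and unions of open sets under the maps $\theta_t$ are open, and $e(t)x=x$ for every $x\in X$. $T$-pseudoequivariance of $f$ will be used to push $T$-orbits forward along $f$, and surjectivity of $f$ to push density forward (recall a continuous surjection $g$ carries a dense set $D$ to a dense set, since $X=g(X)=g(\overline D)\subseteq\overline{g(D)}$).

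\emph{From a dense set to $T$-transitivity.} Suppose $O(x):=\{t f^n(x)\mid t\in T,\ n>0\}$ is dense for some $x$, and set $O_k(x):=\bigcup_{n>k}T(f^n(x))$, so $O_0(x)=O(x)$. Since $f$ is $T$-pseudoequivariant, $f^k\big(T(f^n(x))\big)=T(f^{n+k}(x))$, hence $f^k(O(x))=O_k(x)$; as $f^k$ is a continuous surjection, $O_k(x)$ is dense for every $k\ge 0$. Now fix nonempty open $U,V$. Density of $O(x)$ gives $i>0$ and $s\in T$ with $sf^i(x)\in U$; density of $O_i(x)$ gives $k>i$ and $s'\in T$ with $s'f^k(x)\in V$. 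By $T$-pseudoequivariance $f^{k-i}\big(sf^i(x)\big)=t''f^k(x)$ for some $t''\in T$, and putting $t:=s'\cdot(t'')^{-1}$ and using the action axioms together with $e(t'')f^k(x)=f^k(x)$,
\[
t\cdot f^{k-i}\big(sf^i(x)\big)=\big(s'(t'')^{-1}t''\big)f^k(x)=\big(s'e(t'')\big)f^k(x)=s'f^k(x)\in V .
\]
Since $sf^i(x)\in U$ and $k-i>0$, this yields $t f^{k-i}(U)\cap V\neq\varnothing$, so $f$ is $T$-transitive.

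\emph{From $T$-transitivity to a dense set.} Here I would run a Baire category argument. As $X$ is compact metric it is separable; fix a countable base $\{U_j\}_{j\in\mathbb N}$. Being compact metric, $X$ is complete, hence a Baire space. For each $t\in T$, $\theta_t$ is a homeomorphism with inverse $\theta_{t^{-1}}$ (Theorem~\ref{homeo} and the remarks before it), so $TU_j:=\bigcup_{t\in T}\theta_t(U_j)$ is open, and therefore $W_j:=\bigcup_{n>0}f^{-n}(TU_j)$ is open by continuity of $f$. Two routine checks finish the argument. First, unwinding $\theta_t^{-1}=\theta_{t^{-1}}$ shows $x\in W_j$ iff $O(x)\cap U_j\neq\varnothing$, so $\bigcap_j W_j$ is exactly the set of points with dense $O(x)$. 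Second, $T$-transitivity makes each $W_j$ dense: given nonempty open $U$, pick $n>0$ and $t\in T$ with $tf^n(U)\cap U_j\neq\varnothing$; then some $u\in U$ has $f^n(u)\in t^{-1}U_j\subseteq TU_j$, i.e. $u\in U\cap W_j$. By the Baire property $\bigcap_j W_j$ is a dense $G_{\delta}$, in particular nonempty, and any of its points has dense $O$-set.

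\emph{Where the difficulty lies.} The only genuinely delicate step is the first implication: the naive ``pick a point of $O(x)$ in $U$, then a later point of $O(x)$ in $V$'' needs the tails $O_k(x)$ to stay dense, and the clean reason is that $f^k$ is a continuous surjection while $f^k(O(x))=O_k(x)$, this equality being exactly where $T$-pseudoequivariance enters; perfectness of $\lambda$ is then precisely what lets the stray identity factor $e(t'')$ be absorbed. Incidentally, with surjectivity of $f$ at hand, the hypothesis that $X$ has no isolated point does not seem to be needed and is presumably retained to mirror the classical statement.
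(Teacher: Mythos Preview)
Your argument is correct and structurally matches the paper's: Baire category for the forward implication (the paper states this in a single sentence, so your detailed version is simply an expansion of the same idea), and a direct ``pick two orbit points and bridge them'' argument for the converse. The genuine difference lies in how you run the converse. The paper picks $sf^{m}(x)\in U$, $tf^{n}(x)\in V$ with $m<n$ and then asserts from $T$-pseudoequivariance that $f^{n-m}(s^{-1}U)=rf^{n-m}(U)$ for a \emph{single} $r\in T$; since pseudoequivariance is defined pointwise ($f(T(y))=T(f(y))$), this set-level equality with one $r$ is not obviously justified. You sidestep the issue by (i) using surjectivity of $f$ to show that the tails $O_{i}(x)=f^{i}(O(x))$ remain dense, which also cleanly forces the exponent $k-i>0$, and (ii) applying pseudoequivariance at the single point $sf^{i}(x)$, which is exactly what the definition provides; the absorption of $e(t'')$ via perfectness is the same mechanism the paper relies on when passing from $sf^{m}(x)\in U$ to $x\in f^{-m}(s^{-1}U)$. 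Your closing remark that the hypothesis ``no isolated point'' is never invoked is accurate for both proofs as written.
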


\begin{proof}

Suppose $f$ is $T$-transitive. By using of Baire category theorem and the fact that $\lambda$ is perfect, we can see that there is some $x$ such that $\{ t\cdot f^n(x)~|~t\in T, n>o\}$ is dense in $X$. Conversely, suppose that $\{ tf^n(x)~|~t\in T, n>o\}$ is dense in $X$ for some $x\in X$. Let $U$ and $V$ be two non-empty open subsets of $X$. Then there are $s, t \in T$ and $m, n \in \mathbb{N}$ such that $sf^{m}(x)\in U$ and  $tf^{n}(x)\in V$. We assume that $m < n$. Since $\lambda$ is perfect then  $x\in f^{-m}(s^{-1}U)$. Thus $f^{n}(x)\in f^{n-m}(s^{-1}U)$. Moreover, since $f$ is $T$-pseudoequivariant, then there is $r\in T$ such that $ f^{n-m}(s^{-1}U)= rf^{n-m}(U)$. So $f^{n}(x) \in rf^{n-m}(U)$ and  $tf^{n}(x) \in trf^{n-m}(U)$. Therefore, $tf^{n}(x) \in t^{'}f^{p}(U) \cap V  $ where $t^{'}=tr\in T$ and $p=n-m \in \mathbb{N}$. So $f$ is $T$-transitive.
\end{proof}

 It may be that $T$-transitivity is not preserved by topological conjugacy. Now we introduce topological $T$-conjugacy that can preserve $T$-transitivity.

 \begin{mydef}
 Let $(X,T,\lambda)$ and $(Y,T,\theta)$ be two $T$-spaces. Moreover let $f:X\to X$ and $g:Y\to Y$ be two continuous maps. We say $f$ is \textit{topologically $T$-conjugate} to $g$ if there is a $T$-pseudoequivariant homeomorphism $h:X\to Y$ such that $h\circ f=g\circ h $.
 \end{mydef}

 \begin{theorem}
 Let $(X,T,\lambda)$ and $(Y,T,\theta)$ be two perfect $T$-spaces. If the continuous mapping $f:X\to X$ is topologically $T$-conjugate to  the continuous mapping $g:Y\to Y$ then $f$ is $T$-transitive if and only if $g$ is $T$-transitive.

 \end{theorem}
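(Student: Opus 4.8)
The plan is to transport a dense-orbit point from one system to the other through the conjugating homeomorphism $h$, and then invoke the previous theorem (the characterization of $T$-transitivity via existence of a point whose "$T$-orbit under $f$", namely $\{t\cdot f^n(x)~|~t\in T,\ n>0\}$, is dense). By symmetry of the conjugacy relation it suffices to prove one implication: assuming $f$ is $T$-transitive, I would show $g$ is $T$-transitive.

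First I would check that the hypotheses of the preceding characterization theorem are available, or else argue directly. Since $f$ is $T$-transitive, there is $x\in X$ with $D_f(x):=\{t\cdot f^n(x)~|~t\in T,\ n>0\}$ dense in $X$. Apply $h$: because $h$ is a homeomorphism, $h(D_f(x))$ is dense in $Y$. Now I would compute $h(D_f(x))$. For each $n$, $h(T(f^n(x)))=T(h(f^n(x)))$ by $T$-pseudoequivariance of $h$, and $h(f^n(x))=g^n(h(x))$ from $h\circ f=g\circ h$ (by induction on $n$). Hence $h(\{t\cdot f^n(x)~|~t\in T\})=h(T(f^n(x)))=T(g^n(h(x)))=\{t\cdot g^n(h(x))~|~t\in T\}$, and taking the union over $n>0$ gives $h(D_f(x))=D_g(h(x))$ with $y:=h(x)$. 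Thus $D_g(y)$ is dense in $Y$.

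From density of $D_g(y)$ I would conclude $g$ is $T$-transitive: given non-empty open $U,V\subseteq Y$, the set $D_g(y)$ meets $U$, so $t\cdot g^m(y)\in U$ for some $t\in T$, $m>0$, and meets $V$, so $s\cdot g^n(y)\in V$ for some $s\in T$, $n>0$; then by the same bookkeeping used in the proof of the characterization theorem (using that $\theta$ is perfect to write $y\in g^{-m}(t^{-1}U)$, pushing forward, and using $T$-pseudoequivariance of $g$ to absorb the $t^{-1}$ as a fresh $T$-element acting on $g^{n-m}(U)$) one produces $t'\in T$ and $p\in\mathbb{N}$ with $t'g^p(U)\cap V\neq\varnothing$; the case $n\le m$ is handled symmetrically or by noting $D_g(y)=D_g(g^k(y))$ for suitable $k$. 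So $g$ is $T$-transitive, and reversing the roles of $f$ and $g$ (using $h^{-1}$, which is again a $T$-pseudoequivariant homeomorphism conjugating $g$ to $f$) gives the converse.

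The main obstacle, and the step I would be most careful about, is the identity $h(T(f^n(x)))=T(g^n(h(x)))$: it relies on $T$-pseudoequivariance of $h$ applied at the point $f^n(x)$, plus the inductive verification that $h\circ f^n=g^n\circ h$, so I would spell out that induction and make sure the union over $n$ commutes with $h$ (it does, since images commute with unions). A secondary point is that $g$ need not a priori be onto, so rather than quoting the characterization theorem verbatim I would reproduce its short "conversely" argument directly for $g$, where perfectness of $\theta$ and $T$-pseudoequivariance of $g$ are exactly the ingredients needed; onto-ness of $f$ transfers to $g$ through $h$ anyway, since $g=h\circ f\circ h^{-1}$.
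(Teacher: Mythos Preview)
The paper states this theorem without proof, so there is nothing to compare against directly; I will assess your argument on its own.

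Your route through the dense-orbit characterization has a genuine gap: that characterization (the preceding theorem) assumes $X$ is a \emph{compact metric space with no isolated points} and that $f$ is \emph{onto and $T$-pseudoequivariant}. None of these hypotheses appear in the present theorem, so you are not entitled to extract a point $x$ with $D_f(x)$ dense from the mere assumption that $f$ is $T$-transitive. Your final paragraph also leans on $g$ being $T$-pseudoequivariant, which again is not among the hypotheses. These are not technicalities you can patch by ``checking the hypotheses are available''; they simply are not.

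A direct argument avoids all of this and uses only what is given. Assume $f$ is $T$-transitive and let $U,V\subseteq Y$ be non-empty open. Then $h^{-1}(U),h^{-1}(V)$ are non-empty open in $X$, so there exist $t\in T$, $n\in\mathbb{N}$, and a point $z\in h^{-1}(U)$ with $t\cdot f^{n}(z)\in h^{-1}(V)$. Since $t\cdot f^{n}(z)\in T(f^{n}(z))$ and $h$ is $T$-pseudoequivariant,
\[
h\bigl(t\cdot f^{n}(z)\bigr)\in h\bigl(T(f^{n}(z))\bigr)=T\bigl(h(f^{n}(z))\bigr)=T\bigl(g^{n}(h(z))\bigr),
\]
using $h\circ f^{n}=g^{n}\circ h$. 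Hence $h(t\cdot f^{n}(z))=s\cdot g^{n}(h(z))$ for some $s\in T$, and this element lies in $V$ while $h(z)\in U$; thus $s\,g^{n}(U)\cap V\neq\varnothing$. The converse follows by the same argument applied to $h^{-1}$, which is again a $T$-pseudoequivariant homeomorphism conjugating $g$ to $f$. Note that this argument does not even use perfectness, and it bypasses the dense-orbit detour entirely.
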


Now, we recall that a \textit{top sapce} $T$ is a  Hausdorff d-dimensional differentiable manifold which is also a topological generalized group such that mappings $m_{1}$ and $m_{2}$ are smooth \cite{TS}.
\begin{theorem}
Let $X$ and $Y$ be smooth manifolds and let $T$ be a top space. Suppose $F:X \rightarrow Y$ is a smooth map that is equivariant respect to a transitive perfect smooth generalized action $\lambda$ of $T$ on $X$ and a perfect smooth generalized action $\theta$ of $T$ on $Y$. Then $F$ has constant rank.
\end{theorem}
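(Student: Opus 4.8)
The plan is to adapt the classical equivariant rank theorem to the generalized-group setting; the only genuinely new ingredient is that perfectness of the two actions forces the ``multiplication by $t$'' maps to be diffeomorphisms rather than merely continuous bijections, and once that is in hand the proof is the usual linear-algebra argument combined with transitivity.

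First I would fix $t\in T$ and consider $\lambda_{t}\colon X\to X$, $\lambda_{t}(x)=tx=\lambda(t,x)$. Since $\lambda\colon T\times X\to X$ is smooth, $\lambda_{t}$ is smooth, being the composition of the smooth embedding $x\mapsto(t,x)$ with $\lambda$. Using the action identity $\lambda_{s}\circ\lambda_{r}=\lambda_{s\cdot r}$ together with $t\cdot t^{-1}=t^{-1}\cdot t=e(t)$ and $e(t)=e(t^{-1})$, one gets $\lambda_{t}\circ\lambda_{t^{-1}}=\lambda_{e(t)}=\lambda_{t^{-1}}\circ\lambda_{t}$. Because $\lambda$ is perfect, $e(T)\subseteq T_{x}$ for every $x\in X$, hence $e(t)x=x$ for all $x$, i.e. $\lambda_{e(t)}=\mathrm{id}_{X}$. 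Therefore $\lambda_{t}$ is a bijection whose inverse $\lambda_{t^{-1}}$ is again smooth, so $\lambda_{t}$ is a diffeomorphism of $X$ (this is the smooth refinement of Theorem~\ref{homeo}). The identical argument applied to the perfect action $\theta$ of $T$ on $Y$ shows that for every $t\in T$ the map $\theta_{t}\colon Y\to Y$, $\theta_{t}(y)=ty=\theta(t,y)$, is a diffeomorphism with inverse $\theta_{t^{-1}}$.

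Next I would exploit equivariance. For each $t\in T$ the relation $F(tx)=tF(x)$ reads $F\circ\lambda_{t}=\theta_{t}\circ F$ as maps $X\to Y$. Fixing $x\in X$ and differentiating at $x$ by the chain rule gives
\[
dF_{tx}\circ(d\lambda_{t})_{x}=(d\theta_{t})_{F(x)}\circ dF_{x}.
\]
Since $\lambda_{t}$ is a diffeomorphism, $(d\lambda_{t})_{x}\colon T_{x}X\to T_{tx}X$ is a linear isomorphism; since $\theta_{t}$ is a diffeomorphism, $(d\theta_{t})_{F(x)}\colon T_{F(x)}Y\to T_{tF(x)}Y=T_{F(tx)}Y$ is a linear isomorphism. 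Precomposing or postcomposing a linear map with an isomorphism leaves its rank unchanged, so $\operatorname{rank} dF_{tx}=\operatorname{rank} dF_{x}$.

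Finally I would invoke transitivity of $\lambda$: given any two points $x,x'\in X$ there is $t\in T$ with $tx=x'$, whence $\operatorname{rank} dF_{x'}=\operatorname{rank} dF_{x}$ by the previous step. Thus the function $x\mapsto\operatorname{rank} dF_{x}$ is constant on $X$, i.e. $F$ has constant rank. The one step requiring care is the first one — showing the orbit maps $\lambda_{t}$, $\theta_{t}$ are diffeomorphisms — and that is precisely where perfectness is indispensable, through the identity $\lambda_{e(t)}=\mathrm{id}$; the remaining two steps are routine. Note also that transitivity is used only for $X$ and perfectness is used for both actions, matching the hypotheses exactly.
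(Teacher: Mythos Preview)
Your proof is correct and follows essentially the same route as the paper's own argument: use equivariance to get $\theta_{t}\circ F=F\circ\lambda_{t}$, differentiate, invoke perfectness of both actions to ensure the differentials $(d\lambda_{t})_{x}$ and $(d\theta_{t})_{F(x)}$ are linear isomorphisms, and then use transitivity of $\lambda$ to compare ranks at any two points. The paper is terser---it simply asserts that perfectness makes ${\lambda_{t}}_{\ast}$ and ${\theta_{t}}_{\ast}$ isomorphisms---whereas you spell out explicitly why $\lambda_{t}$ and $\theta_{t}$ are diffeomorphisms via $\lambda_{e(t)}=\mathrm{id}_{X}$ and $\theta_{e(t)}=\mathrm{id}_{Y}$; but the structure and the key ideas are the same.
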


\begin{proof}
Let $x \in X$. Since $ \lambda$ is transitive then for any $y \in X$, there is some $t\in T$ such that $tx=y$. Since $\theta_{t}\circ F= F\circ \lambda_{t}$ for each $t \in T$, then we have  ${\theta_{t}}_{\ast}\circ F_{\ast}= F_{\ast}\circ {\lambda_{t}}_{\ast}$.
The generalized  actions $\lambda$ and $\theta$ are perfect, then ${\lambda_{t}}_{\ast}$  and ${\theta_{t}}_{\ast}$ are isomorphism. So the rank  $F$ at $y$ is the same as its rank at $x$. Thus $F$  has constant rank.
\end{proof}

\end{document}